\title{Existence to nonlinear parabolic problems with unbounded weights}
\author[1]{Iwona Skrzypczak\thanks{email address: iskrzypczak@mimuw.edu.pl}}
\author[2]{Anna Zatorska--Goldstein\thanks{email address: azator@mimuw.edu.pl. The research of AZG has been supported
by~the~NCN
grant
no.~2012/05/E/ST1/03232
(years
2013-2017) and by the Foundation for Polish Science grant no. POMOST BIS/2012-6/3.}}
\affil[1]{\small
Institute of  Mathematics, Polish Academy of Sciences, \newline
ul. \'{S}niadeckich 8, 00-656 Warsaw, Poland
}
\affil[2]{\small
Institute of Applied Mathematics and Mechanics,
University of Warsaw, \newline
ul. Banacha 2, 02-097 Warsaw, Poland
}
\date{}
\begin{document}
\maketitle \sloppy

\thispagestyle{empty}

\renewcommand{\it}{\sl}
\renewcommand{\em}{\sl}

\newcommand{\barint}{
         \rule[.036in]{.12in}{.009in}\kern-.16in
          \displaystyle\int  }
\def\R{{\mathbb{R}}}
\def\r{{\mathbb{R}}}
\def\n{{\mathbb{N}}}
\def\rn{{\mathbb{R}^{N}}}
\def\rN{{\mathbb{R}^{N}}}
\def\rom{{\mathbb{R}^{m}}}
\def\w{\widetilde}
\def\M{{\cal M}}
\def\zi{[0,\infty )}
\def\sf{{\rm supp\,\phi}  }
\def\sx{{\rm supp\,\xi}  }
\def\rp{{\mathbb{R}_{+}}}
\def\bA{{\bar{A}}}
\def\O{{\Omega}}
\def\dv{{\rm div}}
\def\o{{\omega}}
\def\oo{{\overline{\omega}}}
\def\Oc{{\Omega\cap}}
\def\a{{\alpha}}
\def\b{{\beta}}
\def\e{{\epsilon}}
\def\d{{\delta}}
\def\D{{\Delta}}
\def\dL{{\Delta_p^{\omega_2}}}
\def\t{{\tau}}
\def\s{{\sigma}}
\def\g{{\gamma}}
\def\k{{\kappa}}
\def\tu{{\widetilde{u}}}
\def\tW{{\widetilde{W}}}
\def\na{{\nabla}}
\def\rr{{\rangle }}
\def\ll{{\langle }}
\def\irn{{\int_\rn}}

\newtheorem{theo}{\bf Theorem} 
\newtheorem{coro}{\bf Corollary}[section]
\newtheorem{lem}{\bf Lemma}[section]
\newtheorem{rem}{\bf Remark}[section]
\newtheorem{defi}{\bf Definition}[section]
\newtheorem{ex}{\bf Example}[section]
\newtheorem{fact}{\bf Fact}[section]
\newtheorem{prop}{\bf Proposition}[section]

\newcommand{\ol}{\overline}
\newcommand{\wt}{\widetilde}
\newcommand{\ck}{{\cal K}}
\newcommand{\ve}{\varepsilon}
\newcommand{\vp}{\varphi}
\newcommand{\pa}{\partial}
\newcommand{\psa}{\Phi_{s,A}}


\parindent 1em

\begin{abstract}

We consider the   weighted parabolic problem  of the type
\begin{equation*}
\begin{split}  \left\{\begin{array}{ll}
 u_t-\dv(\omega_2(x)|\nabla u|^{p-2} \nabla u )=  \lambda  \omega_1(x) |u|^{p-2}u,&  x\in\Omega,\\
 u(x,0)=f(x),& x\in\Omega,\\
 u(x,t)=0,& x\in\partial\Omega,\ t>0,\\
\end{array}\right.
\end{split}
\end{equation*}
for quite a general class of possibly unbounded weights $ \omega_1,\omega_2$ satisfying the Hardy-type inequality. We prove existence of a global weak solution in the weighted Sobolev spaces provided that $\lambda>0$ is smaller than the optimal constant in the inequality.

The domain is assumed to be bounded or quasibounded. The obtained solution is proven to belong to \[L^p(\R_+; W_{(\omega_1,\omega_2),0}^{1,p}(\Omega))\cap  L^\infty(\R_+; L^2(\Omega )).\] 
\end{abstract}

\smallskip

  {\small {\bf Key words and phrases:}  existence of solutions, Hardy inequalities, parabolic problems, weighted $p$--Laplacian, weighted Sobolev spaces}

{\small{\bf Mathematics Subject Classification (2010)}:  35K55, 35A01, 47J35. }

\section{Introduction}

It is already classical to involve the $p$--Laplace operator
\[
 \Delta_p  u =   {\dv}( |\nabla u|^{p-2}\nabla u)\] in modelling of various processes of diffusion-type (also interpreted in life or social sciences). We consider   the weighted operator called here $\omega$--$p$--Laplacian, which is defined as \begin{eqnarray} 
 \Delta_p^{\omega } u&=&  {\dv}(\omega(x)|\nabla u|^{p-2}\nabla u) \label{Lpom2}
\end{eqnarray}
 with a certain weight function $\omega:\Omega\to\R$. The meaning of replacing $ \Delta_p$ by  $\Delta^\omega_p$ would describe space non-homogeneity of the process.
 
 \medskip

Our aim is to provide clear, self-contained theory of existence  for    nonlinear parabolic equations of the type
\begin{equation}\label{paraprob0}
u_t-\dL u=\lambda \omega_1(x) |u|^{p-2}u\quad \text{in}\quad\Omega_T
\end{equation}
where $p>2$, weight functions $\omega_1,\omega_2\ge 0$ are possibly unbounded, $\Omega\subseteq\rn$ is~a~bounded open set, $\Omega_T=[0,T)\times\Omega$, $T>0$. We develop the previous results~\cite{isazg1} by~allowing $\omega_1$ to be unbounded, which entails  challenges in functional analysis of~the two-weighted Sobolev spaces $W_{(\omega_1,\omega_2)}^{1,p}(\Omega)$. See Section~\ref{ssec:weighted-setting} for the definition and comments on the functional setting. 

\medskip

We impose the restrictions on~the~weights in order to control the structure of~the~two--weighted Sobolev spaces, as well as to~ensure monotonicity of~the leading part of~the operator. Namely, we assume
\begin{itemize}
\item[(W1)] $\omega_1,\omega_2:\overline{\Omega}\to\R_+\cup\{0\}$ and $\omega_1,\omega_2\in L^{1}_{loc}(\Omega)$;

\item[(W2)] $\omega_1^{-\frac{2}{p-2}}\in L^1(\Omega)$;

\item[(W3)] for any  $U\subset\subset \Omega$ there exists a constant $\omega_2(x)\geq c_U>0$ in $U$;

\item[(W4)]  $(\omega_1,\omega_2)$ is a pair of weights in Hardy inequality 
\begin{equation}
\label{eq:hardyintro}
K \int_\Omega \ |\xi|^p \omega_{1}(x)\, dx \leqslant \int_\Omega |\nabla \xi|^p \omega_{2}(x)\,dx;
\end{equation} 

\end{itemize}

Furthermore, assume that there exists $s>p$ such that 
\begin{itemize}
\item[(W5)] for any  $U\subset\subset \Omega$ we have a compact embedding
\begin{equation*}
W^{1,p}_{(\omega_1,\omega_2),0}(U)\subset\subset L^s_{\omega_1}(U);
\end{equation*}

\item[(W6)] there exists $q\in\left(p,s\right)$, such that \[\omega_1^{-\frac{q}{s-q}}\in L^1_{loc}(\Omega)\quad\text{ and} \quad \omega_2^{\frac{q}{q-p}}\in L^1_{loc}(\Omega).\] 
\end{itemize}

\bigskip

 Our main result is the following theorem.
\begin{theo}\label{theo:main} Let $p> 2$, $\Omega\subseteq\rn$ be an open subset, not necessarily bounded, and $f\in L^2(\Omega)$.  Assume   that $\omega_1,\omega_2$ satisfy conditions~(W1)--(W6). 

 Then there exists  $\lambda_0=\lambda_0(p,N,\omega_1,\omega_2)>0$
such that for all $\lambda\in(0,\lambda_0)$ and for arbitrary $T>0$, the parabolic problem
\begin{equation}\label{eq:main}\left\{\begin{array}{ll}
 u_t-\dL u=  \lambda \omega_1(x)|u|^{p-2}u & x\in\Omega_T,\\
 u(x,0)=f(x)& x\in\Omega,\\
 u(x,t)=0& x\in\partial\Omega,\ t>0,\\
\end{array}\right.
\end{equation}
has a weak solution \[u \in L^p(0,T; W_{(\omega_1,\omega_2),0}^{1,p}(\Omega)),\text{  such that }\ 
u_t  \in L^{p'}(0,T; {W^{-1,p'}_{(\omega_1',\omega_2')}(\Omega)}),\] i.e.
\begin{equation*}
\int_{\Omega_T}\left(  u_t\xi+\omega_2|\nabla u|^{p-2} \nabla u \nabla \xi -\lambda \omega_1 |u|^{p-2}u \xi\right)dx\,dt=0,
\end{equation*}
holds for each $\xi\in L^p(0,T; W_{(\omega_1,\omega_2),0}^{1,p}(\Omega))$. 

Moreover, $u\in L^p(\R_+; W_{(\omega_1,\omega_2),0}^{1,p}(\Omega))\cap  L^\infty(\R_+; L^2(\Omega ))$. 
\end{theo}

 \begin{rem}In fact, the proof of the above theorem implies the existence to  \begin{equation*} \left\{\begin{array}{ll}
 u_t-\dL u=  \lambda W(x)|u|^{p-2}u & x\in\Omega_T,\\
 u(x,0)=f(x)& x\in\Omega,\\
 u(x,t)=0& x\in\partial\Omega,\ t>0,\\
\end{array}\right.
\end{equation*}
with any $W(x)\leq\omega_1(x)$ without the  assumption $||W||_{L^\infty(\Omega_T)}<\infty,$ which extends  the approach of~\cite{isazg1}.
\end{rem}

We present examples of weights satisfying the assumptions (W1)-(W6).   

\begin{rem}[Examples of admissible weights]\label{rem:ex} Denote $d(x)=dist(x,\partial\Omega)$. We call a domain $\Omega$ quasibounded if \[ \lim_{|x|\to\infty,\ x\in\Omega} d(x)=0.\] 
Suppose $\Omega\subset\rn$, $1\leq p\leq q<\infty$, $N>1$, $\frac{N}{q}-\frac{N}{p}+1>0$.  The conditions (W1)-(W6) are satisfied by the following types of weights:
\begin{itemize}
\item[i)] $\omega_1(x)=|x|^{-p}$, $\omega_2(x)\equiv 1$ on $\Omega$ being a bounded Lipschitz domain.  This example relates to the result of~\cite{gaap};
\item[ii)] $\omega_1(x)=d^{\beta-p}(x)$, $\omega_2(x)=d^{\beta}(x)$, with $\beta<p-N$, on $\Omega$ being bounded or quasibounded;  
\item[iii)] $\omega_1(x)=d^{\beta-p}(x)\,|\log d(x)|^\delta$, $\omega_2(x)=\d^{\beta }(x)\,|\log d(x)|^\delta$, close to the boundary  (when $d(x)\leq \frac{1}{2}$) and $\omega_1=\omega_2=const$ when $d(x)> \frac{1}{2}$, with $\beta<p-N<0,\,\delta>0$ on $\Omega$ being bounded or quasibounded.
\end{itemize}
We stress that in the cases {ii)} and {iii)} the domain can be unbounded as well.  
\end{rem}

\bigskip

\textbf{State of art.} The existence of solutions to problems
\[u_t-\dv A(x,t,u,\nabla u)= f,\]
 where the involved operator is monotone and has $p$--growth, is very well understood, e.g.~\cite{boc-ors1,boc-ors2,boccardo}. Nonetheless, this research concerns the case, when the right--hand side does not depend on~the~solution itself.   Various physical models (\textit{combustion models}) involve semilinear parabolic problems of the form
$$
u_t-\Delta u= f(u).
$$
Fujita's Theory, developed since 1960s, analyses the possible singularities of~solutions. There are known examples of problems, where solutions explode (blow-up) to infinity in finite time. More recent research in that directions was carried out by Giga and Kohn.

In~\cite{vz} Vazquez and Zuazua, generalizing the seminal paper by Baras and Goldstein~\cite{bargold}, describe the asymptotic behaviour of the heat equation that reads \[u_t=\D u +V(x)u \quad\mathrm{and}\quad \D u +V(x)u+\mu u=0,\]
where $V(x)$ is an inverse--square potential. 

 

 The inspiration of our research was the paper of  Garc\'{i}a Azorero and  Peral Alonso~\cite{gaap}, who 
obtain the existence of weak solutions to the corresponding parabolic problem \begin{equation*}   u_t -\Delta_p u=\frac{\lambda}{|x|^p}|u|^{p-2}u
\end{equation*}
on a bounded domain~$\Omega$. The topic was developed  by
 Dall'Aglio,  Giachetti, and Peral~\cite{n} concern analysis of solutions to the problem of the form
\[u_t -\dv\left( \frac{|\nabla u|^{p-2}\nabla u}{|x|^{p\gamma}}\right)=\frac{\lambda |u|^{p-2}u}{|x|^{(\gamma+1)p}}\]
considered on a bounded domain~$\Omega$. For further closely related results we refer to~\cite{AbPeWa,AtMePe,GoHaRh,MeMoPeSc}, as for the weighted fast diffusion equation to~\cite{weightmatteo1,weightmatteo2}.

Another approach that is influential for us comes from~\cite{anh} by Anh and~Ke. The  initial boundary value problem for a class of~quasilinear parabolic equations  considered therein involves, as in our case~\eqref{paraprob0}, the weighted $p$-Laplace operator and reads
\[u_t-\dv(\sigma(x)|\nabla u|^{p-2} \nabla u )=  \lambda  |u|^{p-2}u-f(x,u),
\] where $f$ is a function of power-type growth with respect to the second variable perturbed additively by integrable dependence on the first variable.

Although there exist vast literature on existence to parabolic problems, it seems particularly hard to find a proper comprehensive reference to studies on problems in the weighted setting except the case of power-type weights.

\bigskip

\textbf{Our approach.} We analyse nonlinear problems of the type~\eqref{paraprob0} in the two-weighted spaces, where the involved weights are general. In Preliminaries we explain in detail notation, properties of the weighted setting, as well as the role of each of assumptions (W1)-(W6). For this moment let us only mention few key objectives. Our major difficulties result from involving more advanced setting than the classical one investigated in~\cite{anh,n,gaap}. 
 Due to presence of~general class of weights both in the leading part of~the operator and on~the~right-hand side of~\eqref{paraprob0}, we employ the two-weighted Sobolev spaces $W^{1,p}_{(\omega_1,\omega_2)}(\Omega)$.  Since the weights are different, integration by parts can be non-admissible  and the structure of~the dual space complicates.  Indeed, in the case of the weighted Lebesgue spaces we observe that $(L^p_{\omega }(\Omega))^*\neq  L^{p'}_{\omega }(\Omega)$. We shall stress that unbounded domains are admissible, if only (W5) holds. Remark~\ref{rem:ex} provides examples of such domains.

Let us concisely summarize the main ideas of the proof. In order to construct a  weak solution to~\eqref{eq:main} we first consider a sequence of problems with truncated weights on the right-hand side, to which the solutions exist due to {\cite[Theorem~3.1]{isazg1}}. Then we pass to the limit with the level of truncation using the auxiliary compactness results inspired by the results introduced in the non-weighted $p$-Laplacian case by~\cite{boccardo,bocmurpuel}.  
 
 \bigskip
 
The paper is organised as follows. Section~\ref{prelim} provides disscusion on properties of the two-weighted Sobolev spaces and assumptions on the admissible weights.  Auxiliary compactness results are presented in Section~\ref{sec:aux}, whereas in Section~\ref{sec:main} the proof of Theorem~\ref{theo:main} is given. In the end we attach Appendix providing the required classical tools.

\section{Preliminaries}\label{prelim}

\subsection{Notation}

In the sequel we assume that  $p>  2$, $\frac{1}{p}+\frac{1}{p'}=1$, $\O\subset \rn$ is an open subset not necessarily bounded. For $T>0$ we denote $\Omega_T=\Omega\times(0,T)$. By $\langle f,g\rangle$ we denote the standard scalar product in $L^2(\Omega)$.

Let $B(r)\subset\rn$ denote the ball with the radius $r$, whose center shall be clear from the context. Then $|B(r)|$ is its Lebesgue's measure, while $\omega(B(r))$ its $\omega$-measure, i.e. $\omega(B(r))=\int_{B(r)}\omega(x)\,dx$.

We use truncations $T_k(f)(x)$ defined as follows \begin{equation}T_k(f)(x)=\left\{\begin{array}{ll}f & |f|\leq k;\\
k\frac{f}{|f|}& |f|\geq k.
\end{array}\right. .\label{Tk}
\end{equation}

\subsection{Weighted Lebesgue and Sobolev spaces}\label{ssec:weighted-setting}

Suppose $\omega$ is a positive, Borel  measurable, real  function  defined on an open set $\Omega\subset  \rn$. Let   \begin{equation}
 \label{om'}
 \omega'=\omega^{-1/(p-1)}.
 \end{equation} 
 
\begin{defi}[$B_p$--condition,~\cite{kuf-opic}] We say that $\omega$ satisfies the $B_p$--condition on $\Omega$ ($\omega\in B_p(\Omega)$), if
 \begin{equation}
 \label{Bp}
 \omega'\in L^1_{{  loc}}(\Omega).
 \end{equation}
\end{defi}

Note that any $\omega\in L^1_{loc}(\Omega)$, which is strictly positive inside $\Omega$ satisifes $B_p$ condition on $\Omega$.
 
\begin{rem} When $1<p<\infty$ and $\omega\in B_p$, we have $\displaystyle L^{p}_{\omega,loc}(\Omega)\subseteq  L^1_{{ loc}}(\Omega),$ see~\cite{kuf-opic}. Moreover, for any $\omega\in L^1_{loc}(\Omega)$ and $s>p$ we have
\begin{equation}
\label{emb:sp}
L^s_{\omega,loc}(\Omega)\subset L^p_{\omega,loc}(\Omega).
\end{equation}
\end{rem}
 
\bigskip

 If $\nabla$ denotes distributional gradient
 , we denote
\begin{equation}\label{polnorma}
W^{1,p}_{(\omega_1,\omega_2)}(\Omega):= \left\{ f\in L^{p}_{\omega_1}(\Omega) :  
\nabla f\in (L_{\omega_2}^p (\Omega ))^N\right\}
\end{equation}
  with the norm
\begin{multline*}
\| f\|_{W^{1,p}_{(\omega_1,\omega_2)}(\O)}\ :\,=\ \| f\|_{L^{p}_{\omega_1}(\O)} + \| \nabla f\|_{(L_{\omega_2}^p (\Omega ))^N}\\
=\left(\int_{\Omega}|f|^{p}{\omega_1(x)}dx \right)^\frac{1}{p} + \left(\int_{\Omega}\sum_{i=1}^N\left|\frac{\partial f}{\partial x_i} \right|^{p}{\omega_2(x)}dx \right)^\frac{1}{p}.
\end{multline*}
 
 Under $B_p$-condition, the weighted Sobolev space has the basic properties.
 \begin{fact}[e.g.~\cite{kuf-opic}]\label{factemb}   If  $p>1$, $\Omega\subset \rn$ is an open set, $\omega_1,\omega_2$ satisfy $B_p$--condition~\eqref{Bp}, then \begin{itemize}
 \item[(i)] $W^{1,p}_{(\omega_1,\omega_2)}(\Omega)$ defined by \eqref{polnorma} equipped with the norm $\| \cdot\|_{W_{(\omega_1,\omega_2)}^{1,p}(\Omega)}$ is a~Banach space;
  \item[(ii)]  $\displaystyle \overline{Lip_0(\Omega)}  =\overline{C^\infty_0(\Omega)}  =W^{1,p}_{(\omega_1,\omega_2),0}(\O),$ where the closure is in the norm $\| \cdot\|_{W_{(\omega_1,\omega_2)}^{1,p}(\Omega)}$;
\item[(iii)] if $\omega_1,\omega_2$ are a pair in the Hardy-Poincar\'{e} inequality of the form~\eqref{eq:hardyintro}, we may consider the Sobolev space $W^{1,p}_{(\omega_1,\omega_2),0}(\O)$ equipped with the norm
\[\| f\|_{W^{1,p}_{(\omega_1,\omega_2), 0}(\O)} = \| \nabla f\|_{L^{p}_{\omega_2}(\O)}.\]
  \end{itemize} 
\end{fact}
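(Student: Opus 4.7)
The three claims in Fact~\ref{factemb} are separate and standard; my plan treats them in order.

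For the Banach-space assertion, I would take a Cauchy sequence $\{f_n\}\subset W^{1,p}_{(\omega_1,\omega_2)}(\Omega)$, which by definition splits into a Cauchy sequence in $L^p_{\omega_1}(\Omega)$ and a Cauchy sequence of gradients in $L^p_{\omega_2}(\Omega)^N$. Completeness of the weighted Lebesgue spaces (routine, since $L^p_{\omega}$ is isomorphic to a standard $L^p$ of the measure $\omega\,dx$) produces limits $f\in L^p_{\omega_1}(\Omega)$ and $g\in L^p_{\omega_2}(\Omega)^N$. The only nontrivial point is to identify $g=\nabla f$ in the distributional sense. For this I would invoke the $B_p$-condition (W1) together with the inclusion $L^p_{\omega_i,\mathrm{loc}}(\Omega)\subseteq L^1_{\mathrm{loc}}(\Omega)$ noted just before the Fact: convergence in $L^p_{\omega_1}$ and $L^p_{\omega_2}$ then implies convergence in $L^1_{\mathrm{loc}}(\Omega)$ for $f_n$ and $\nabla f_n$, so that for every $\varphi\in C_0^\infty(\Omega)$ one may pass to the limit in $\int_\Omega f_n\,\partial_i\varphi\,dx=-\int_\Omega(\nabla f_n)_i\varphi\,dx$. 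The identification $g=\nabla f$ follows.

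For the density assertion $\overline{\mathrm{Lip}_0(\Omega)}=\overline{C_0^\infty(\Omega)}=W^{1,p}_{(\omega_1,\omega_2),0}(\Omega)$, one inclusion is trivial since $C_0^\infty(\Omega)\subset\mathrm{Lip}_0(\Omega)$ and both are contained in the latter space by definition. I would show $\overline{\mathrm{Lip}_0(\Omega)}\subseteq\overline{C_0^\infty(\Omega)}$ by mollification: for $\psi\in\mathrm{Lip}_0(\Omega)$, the standard mollifications $\psi*\rho_\varepsilon$ lie in $C_0^\infty(\Omega)$ for small $\varepsilon$ because $\mathrm{supp}\,\psi$ is compact in $\Omega$, and both $\psi*\rho_\varepsilon\to\psi$ and $\nabla(\psi*\rho_\varepsilon)=(\nabla\psi)*\rho_\varepsilon\to\nabla\psi$ pointwise a.e. with a uniform $L^\infty$ bound on a fixed compact set. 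Dominated convergence, using the local integrability of $\omega_1$ and $\omega_2$ from (W1), then gives convergence in the weighted norm. The definition $W^{1,p}_{(\omega_1,\omega_2),0}(\Omega):=\overline{C_0^\infty(\Omega)}$ closes the chain.

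For the equivalent norm on $W^{1,p}_{(\omega_1,\omega_2),0}(\Omega)$, the Hardy-type inequality~\eqref{eq:hardyintro} holds a priori for $\xi\in C_0^\infty(\Omega)$, and by the density result just proved it extends to every $f\in W^{1,p}_{(\omega_1,\omega_2),0}(\Omega)$, giving $\|f\|_{L^p_{\omega_1}(\Omega)}\leq K^{-1/p}\|\nabla f\|_{L^p_{\omega_2}(\Omega)}$. Combined with the trivial estimate $\|\nabla f\|_{L^p_{\omega_2}(\Omega)}\leq\|f\|_{W^{1,p}_{(\omega_1,\omega_2)}(\Omega)}$, this yields two-sided control between the full norm and the gradient-only seminorm, so the latter is an equivalent norm on the subspace.

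The only step that requires any care is the identification of the distributional gradient in part~(i); the $B_p$-condition is precisely what makes the passage from weighted convergence to $L^1_{\mathrm{loc}}$ convergence automatic, so there is no genuine obstacle. The remaining items are routine consequences of density and the Hardy inequality.
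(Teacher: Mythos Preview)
Your proof is correct and follows the standard route; however, the paper does not actually prove Fact~\ref{factemb} at all. It is stated as a known result with a citation to Kufner and Opic (the bracketed ``e.g.~[kuf-opic]'' in the heading), and no argument is supplied. So there is no paper proof to compare against --- the authors simply import the result.

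That said, your outline is precisely the argument one finds in the cited reference: completeness via the $B_p$-condition to pass from weighted $L^p$ convergence to $L^1_{\mathrm{loc}}$ convergence and thereby identify the distributional gradient; equality of the two closures by mollifying compactly supported Lipschitz functions and using dominated convergence with $\omega_i\in L^1_{\mathrm{loc}}$; and equivalence of norms on the zero-trace subspace by extending the Hardy inequality from $C_0^\infty$ by density. Nothing is missing.
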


\begin{fact}
Operator $\dL$, given by~\eqref{Lpom2}, is hemicontinuous, i.e. for all $u,v,w\in W^{1,p}_{(\omega_1,\omega_2),0}(\O)$ the mapping $\lambda\mapsto\ll \dL(u+\lambda v),w\rr$ is continuous from $\r$ to $\r$.
\end{fact}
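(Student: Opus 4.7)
The plan is to verify hemicontinuity directly from the definition via the dominated convergence theorem. Unwinding the duality pairing (here $\Delta_p^{\omega_2}$ acts as an element of the dual of $W^{1,p}_{(\omega_1,\omega_2),0}(\Omega)$ through integration by parts against a test function), the statement amounts to showing that for fixed $u,v,w\in W^{1,p}_{(\omega_1,\omega_2),0}(\Omega)$ the real-valued function
\[
F(\lambda)\ :=\ -\int_\Omega \omega_2(x)\,|\nabla u+\lambda\nabla v|^{p-2}(\nabla u+\lambda\nabla v)\cdot\nabla w\,dx
\]
is continuous in $\lambda\in\R$.

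First I would fix a sequence $\lambda_n\to\lambda_0$ and note that the vector field $\xi\mapsto |\xi|^{p-2}\xi$ is continuous on $\rn$ (since $p>2$), so the integrand converges pointwise a.e.\ on $\Omega$ as $n\to\infty$. Next I would produce a uniform integrable majorant: choose $M$ with $|\lambda_n|\le M$ for all $n$, and estimate
\[
\bigl|\omega_2(x)\,|\nabla u+\lambda_n\nabla v|^{p-2}(\nabla u+\lambda_n\nabla v)\cdot\nabla w\bigr|\ \le\ C_p\,\omega_2(x)\bigl(|\nabla u|^{p-1}+|\nabla v|^{p-1}\bigr)|\nabla w|,
\]
with $C_p$ depending only on $p$ and $M$.

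The majorant is integrable by Hölder's inequality with exponents $p/(p-1)$ and $p$ applied to the splitting $\omega_2=\omega_2^{(p-1)/p}\cdot\omega_2^{1/p}$:
\[
\int_\Omega \omega_2\,|\nabla u|^{p-1}|\nabla w|\,dx\ \le\ \Bigl(\int_\Omega \omega_2|\nabla u|^p\,dx\Bigr)^{(p-1)/p}\Bigl(\int_\Omega \omega_2|\nabla w|^p\,dx\Bigr)^{1/p}<\infty,
\]
because $u,w\in W^{1,p}_{(\omega_1,\omega_2),0}(\Omega)$ have $\nabla u,\nabla w\in L^p_{\omega_2}(\Omega)$ by Fact~\ref{factemb}; the analogous estimate handles the $|\nabla v|^{p-1}|\nabla w|$ term. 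Applying the Lebesgue dominated convergence theorem then yields $F(\lambda_n)\to F(\lambda_0)$, completing the proof.

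There is no real obstacle here: since we are not claiming continuity in the strong topology but only along one-dimensional affine lines, and since the nonlinearity $\xi\mapsto|\xi|^{p-2}\xi$ is continuous pointwise, everything reduces to producing the Hölder-based dominant majorant. The only point to be mildly careful about is that the relevant integrability $|\nabla u|,|\nabla v|,|\nabla w|\in L^p_{\omega_2}(\Omega)$ is guaranteed by the definition of $W^{1,p}_{(\omega_1,\omega_2),0}(\Omega)$, so no additional use of the Hardy inequality (W4) or of the assumption (W2) on $\omega_1$ is required for this fact.
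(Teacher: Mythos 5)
Your argument is correct: the paper states this Fact without proof, and your dominated-convergence argument is the standard way to establish hemicontinuity of the weighted $p$-Laplacian. The pointwise continuity of $\xi\mapsto|\xi|^{p-2}\xi$, the majorant $C_{p,M}\,\omega_2\bigl(|\nabla u|^{p-1}+|\nabla v|^{p-1}\bigr)|\nabla w|$ obtained from $(a+b)^{p-1}\le 2^{p-2}(a^{p-1}+b^{p-1})$, and the H\"older estimate with the splitting $\omega_2=\omega_2^{(p-1)/p}\omega_2^{1/p}$ are all sound, and you are right that only $\nabla u,\nabla v,\nabla w\in (L^p_{\omega_2}(\Omega))^N$ is needed, with no appeal to (W2) or (W4).
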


\bigskip

We look for solutions in the space $L^p(0,T;W^{1,p}_{(\omega_1,\omega_2)}(\O))$, i.e.
\begin{equation*}
L^p(0,T;W^{1,p}_{(\omega_1,\omega_2)}(\O))=\left\{f\in L^p(0,T;L^{p}_{\omega_1}(\O)):\nabla f\in (L^p(0,T;L^{p}_{\omega_2}(\O)))^N\right\},
\end{equation*}
where $\nabla$ denotes distributional gradient with respect to the spacial variables, equipped with the norm
\begin{equation*}
\| f\|_{L^p(0,T;W^{1,p}_{(\omega_1,\omega_2)}(\O))}:= \left(\int_{0}^T\| f\|^p_{L^{p}_{\omega_1}(\O)}dt\right)^\frac{1}{p} + \left(\int_{0}^{T}\| \nabla f\|^p_{(L_{\omega_2}^p (\Omega ))^N}dt\right)^\frac{1}{p}.
\end{equation*}

\subsubsection*{Dual spaces}

Let us stress that \[(L^p_{\omega }(\Omega))^*\neq  L^{p'}_{\omega }(\Omega) ,\qquad \mathrm{but}\qquad (L^p_{\omega }(\Omega))^*= L^{p'}_{\omega '}(\Omega) \]
with $\omega'$ given by~\eqref{om'}. 

By $W^{-1,p'}_{(\omega_1',\omega_2')}(\O)$ we denote the dual space to $W^{1,p}_{(\omega_1,\omega_2),0}(\O)$ and the duality pairing is given by the standard scalar product. We note that \[ L^{p'}(0,T;W^{-1,p'}_{(\omega_1',\omega_2')}(\O))\quad\text{is the dual space to}\quad L^p(0,T;W^{1,p}_{(\omega_1,\omega_2),0}(\O)).\]

 \subsection{Comments on admissible weights}\label{ssec:comwei}

Let us  present the reasons for which we assume the conditions (W1)-(W6). 

Condition (W1) is a general assumption fixing the setting of $L^{p}_{\omega_1}(\Omega)$ and $W^{1,p}_{(\omega_1,\omega_2)}(\Omega)$.  To ensure that the weighted  Sobolev space $W^{1,p}_{(\omega_1,\omega_2)}(\Omega)$ is a Banach space, we need to assume $\omega_1\in B_p(\Omega)$, cf.~\eqref{Bp}. However, it is necessary to assume a stronger condition  $\omega_1^{-\frac{2}{p-2}}\in L_{loc}^1(\Omega)$, to obtain the embbedding
\[L^p_{\omega_1,loc}(\Omega)\subset L^{p'}_{\omega_1',loc}(\Omega)\]and (W2), namely $\omega_1^{-\frac{2}{p-2}}\in L ^1(\Omega)$, to ensure \[ W^{1,p}_{(\omega_1,\omega_2),0}(\Omega)\subset L^2(\Omega).\]
 Condition (W3)  guarantees strict monotonicity of the operator.  Moreover, it implies that $\omega_2\in B_p(\Omega)$, cf.~\eqref{Bp}, which is necessary to ensure that  $W^{1,p}_{(\omega_1,\omega_2)}(\Omega)$ is a Banach space. 
 
We need $(\omega_1,\omega_2)$ to be a pair of weights in the Hardy inequality (W4), to get Fact~\ref{factemb} {\it (iii)}.  We shall stress that there are multiple methods of deriving weights admissible in the Hardy inequalities having the form~\eqref{eq:hardyintro}. In particular, the results of the first author~{\cite[Theorem 4.1]{plap}} show that the weights  may be generated by~nonnegative solutions to the elliptic problem and the regularity conditions imposed on the weights are in fact expected regularity properties of~the~solutions. 
 
Condition (W5), namely a compact embedding $W^{1,p}_{(\omega_1,\omega_2),0}(U)\subset\subset L^s_{\omega_1}(U)$ for any $U\subset\subset \Omega$, is necessary for the compactness method of Boccardo and Murat~\cite{boccardo}. To obtain (W5) the result by Franchi, Serapioni and Serra Cassano~\cite[Theorem~3.4]{fra-ser-ser} can be applied. If one is equipped with another continuous embedding of the weighted Sobolev space into the weighted Lebesgue space, they may apply the results by Opic and Kufner~\cite[Sections 17 and 18]{OpKuf} to obtain compact embedding on domains similar to John domains.  In particular, the authors provide the Muckenhoupt-type conditions for radial weights on an outer domain sufficient for compactness of the required embedding. In the approach of~\cite{fra-ser-ser} the condition is also of the Muckenhoupt-type and the crucial issue is geometry of the boundary. For other ideas on compact embeddings in weighted Sobolev spaces we refer to~\cite[Proposition~2.1]{anh} by Anh and Ke.

In the end we assume the technical integrability condition~(W6). Note that $\omega_1^{-q/({s-q})}\in L^1_{loc}(\Omega)$ may follow from (W2). It depends on the possible values of exponents~$s$ and $q$. Notice its consequence in~\eqref{embLq}. 

\bigskip

We have the following consequences of the assumptions  on embeddings.
 
\begin{rem}
If $\Omega$ is bounded, $2\leq p<\infty$ and $\omega_1,\omega_2$ satisfy~(W1)-(W4), then\begin{equation*}
W^{1,p}_{(\omega_1,\omega_2)}(\Omega)\subset 
  L^{p'}_{\omega_1'}(\Omega)=( L^p_{\omega_1}(\Omega))^*
 \subset (W^{1,p}_{(\omega_1,\omega_2),0}(\Omega))^*=W^{-1,p'}_{(\omega_1',\omega_2')}(\Omega).
\end{equation*} 
and
\begin{equation*}
 L^p(0,T;
W^{1,p}_{(\omega_1,\omega_2)}(\Omega)) \subset L^p(0,T;L^{p'}_{\omega_1'}(\Omega))\subset  L^{p'}(0,T;W^{-1,p'}_{(\omega_1',\omega_2')}(\Omega)).
\end{equation*}
\end{rem}

\begin{rem} If $\Omega$ is bounded, $2< p<s $ and $\omega_1,\omega_2$ satisfy~(W1)-(W5), then \begin{equation}
\label{emb:chain2} W^{1,p}_{(\omega_1,\omega_2)}(\Omega)\subset \subset
 L^s_{\omega_1}(\Omega)\subset L^{p'}_{\omega_1'}(\Omega) 
 \subset W^{-1,p'}_{(\omega_1',\omega_2')}(\Omega).
\end{equation}
Furthermore, \begin{equation*}
W^{1,p}_{(\omega_1,\omega_2),0}(\Omega)\subset\subset L^2(\Omega)
\end{equation*}
and \begin{equation}
\label{eq:L2L2emb}
L^p(0,T;W^{1,p}_{(\omega_1,\omega_2),0}(\O))\subset  L^2(0,T;L^2(\Omega))= L^2(\Omega_T).
\end{equation}
Moreover, if additionally we have (W6), then
\begin{equation}
\label{embLq}  
 L^s_{\omega_1,loc}(\Omega)\subset L^{q}_{loc} (\Omega)\quad \text{for $q\in(p,s)$}.
\end{equation}
\end{rem}

\section{Auxiliary results}\label{sec:aux}
 
 This section  concerns necessary compactness properties and recalls the result on existence to the problem with a bounded weight on the right-hand side.
 
 \medskip
  
We need the following version of~\cite[Lemma~4.2]{boccardo} adjusted to the weighted setting. \begin{prop}\label{prop:boc-mur-appl} Suppose $2<p<\infty$ and $\omega_1,\omega_2$ satisfy~(W1)-(W5). Assume further that 
\begin{equation}\label{eq:lem:boc-mur}
(u_m)_t=h_m\quad \text{in} \quad {\cal D}'(\Omega),
\end{equation}
where $h_m$ --- bounded in $L^{p'}(0,T; W^{-1,p'}_{(\omega_1,\omega_2)}(U))$ and $u_m\xrightharpoonup[m\to\infty]{} u$ in~$L^{p}(0,T; W^{1,p}_{(\omega_1,\omega_2),0}(\Omega)).$

Then  \begin{itemize}
\item[(a)] $u_m\xrightarrow[m\to\infty]{} u$ strongly in $L^{p}(0,T; L^{s}_{\omega_1}(U));$
\item[(b)]  $u_m\xrightarrow[m\to\infty]{} u$ a.e. in $\Omega_T$ (up to a subsequence).
\end{itemize}
\end{prop}
 
\begin{proof}
 Let us consider a function $\phi(x,t)=\psi(x)\eta(t)$, where $\psi\in {\cal D}(\Omega)$ and $\eta\in  {\cal D}(0,T)$, and set $v_m=\phi u_m$. For any bounded open subset $U$, such that $\mathrm{supp}\phi\subset U\subset\Omega, $ we have \[(v_m)_t=(\phi u_m)_t=\phi(u_m)_t+\phi_tu_m=\phi h_m+\phi_tu_m.\]

Then $v_m$ is bounded in $L^{p}(0,T; W^{1,p}_{(\omega_1,\omega_2)}(U))$  and, due to~\eqref{eq:lem:boc-mur}, $(v_m)_t$ is bounded in $L^{p'}(0,T; W^{-1,p'}_{(\omega_1',\omega_2')}(\Omega)).$ We are going to apply the Aubin--Lions Lemma (Theorem~\ref{AubinLionsLemma}). Let us note that if $p>2$, then (W5) and~\eqref{emb:chain2} gives
\[W^{1,p}_{(\omega_1,\omega_2),0}(U)\subset\subset L^s_{\omega_1}(U) \subset W^{-1,p'}_{(\omega_1',\omega_2')}(\Omega).\]
Therefore $v_m$ is relatively compact in $L^p(0,T;L^s_{\omega_1}(U)).$ 
 
Moreover, since we know~\eqref{eq:L2L2emb}, strong convergence in Lebesgue's space implies convergence almost everywhere. 
\end{proof} 
 
For the convenience of the reader, we provide the  following extension of~\cite[Lemma~5]{bocmurpuel} with the proof.
 \begin{prop}\label{prop:bmpuel} Let $U$ be a bounded open subset in $\rn$, $U_T:=U\times(0,T)$, $2<p<\infty$ and $\omega_1,\omega_2$ satisfy~(W1)-(W4).  Assume that  $\nu_m\rightharpoonup \nu$ weakly in~$L^{p}(0,T; W^{1,p}_{(\omega_1,\omega_2),0}(U))$ and a.e. in $U_T$,  and \begin{equation}
 \int_{U_T} \omega_2 \left[  |\nabla \nu_m|^{p-2}\nabla \nu_m- |\nabla \nu |^{p-2}\nabla \nu \right]\nabla(\nu_m-\nu)\,dx\,dt\to 0.\label{eq:limzero}
 \end{equation}
 Then $\nabla\nu_m\to \nabla\nu$ strongly in $L^{p}(0,T; (L^{p}_{\omega_2}(U))^N)$, when $m\to\infty$.
 \end{prop}

 \begin{proof}
Let $D_m$ be defined by 
 \[D_m(x)= \left[  |\nabla \nu_m|^{p-2}\nabla \nu_m- |\nabla \nu |^{p-2}\nabla \nu \right]\nabla(\nu_m-\nu).\]
 By the monotonicity of $\Delta_p^{\omega_2}$ we note that $\omega_2(x)D_m\geq 0$. Since~\eqref{eq:limzero}, observe that $D_m\to 0$ in $L^1(0,T;L^1_{\omega_2}(U))$ strongly. Thus, up to a subsequence $D_m\to 0$ a.e. in~$U_T$. Recall $U_T$ is bounded. Suppose $X\subset U$ is a maximal set of full Lebesgue's measure (and therefore of full $\omega_2$-measure), where for each $x\in X$ we have
 \[
 |\nu(x)|<\infty,\quad |\nabla \nu(x)|<\infty,\quad \nu_m(x)\to \nu(x),\quad D_m(x)\to 0.\]
 Clearly $\omega_2|\nabla \nu_m|^p\geq 0$ and $0\leq D_m(x)$. Moreover,
 \begin{multline*} D_m(x)=|\nabla \nu_m|^p+|\nabla \nu|^p-|\nabla \nu_m|^{p-2}\nabla \nu_m \nabla \nu-|\nabla \nu |^p \nabla \nu  \nabla \nu_m\geq \\
 \geq |\nabla \nu_m|^p -c(x)\left(|\nabla \nu_m|^{p-1}+|\nabla \nu_m|\right),
 \end{multline*}
with $c(x)$ dependent on $X$, but not on $m$. As $D_m(x)\to 0$, we infer that $|\nabla \nu_m|$ is uniformly bounded on $X$.

Let us take arbitrary $x_0\in X$ and denote
\[\zeta_m=\nabla \nu_m(x_0),\qquad \zeta =\nabla \nu (x_0).\]
Observe that $\omega_2(x_0)>0$ and $(\zeta_m)$ is a bounded sequence. Set $\zeta_*$ as one of~its cluster points. Recall $D_m(x_0)\to 0$ and note that
\[ D_m(x_0)\to (|\zeta_*|^{p-2}\zeta_*-|\zeta |^{p-2}\zeta )(\zeta_*-\zeta).\]
Thus, $\zeta=\zeta_*$ is a unique cluster point of whole the sequence and $\nabla \nu_m(x_0)\to\nabla \nu (x_0)$ for arbitrary $x_0\in X$. Then
\[\omega_2|\nabla \nu_m|^p\to \omega_2|\nabla \nu |^p\qquad \mathrm{in}\quad X.\]
It implies uniform integrability of the sequence $  |\nabla u_m|^{p } $ in  $L^1_{\omega_2}(X)$, which implies uniform integrability in  $L^1_{\omega_2}(U)$.
 
 Therefore, Vitali's Convergence Theorem (Theorem~\ref{theo:VitConv}) yields that 
 \[ \int_U \omega_2  \left(  |\nabla \nu_m|^{p } - |\nabla \nu |^{p } \right)dx\to 0\quad \text{for}\quad m\to\infty \]and the claim follows.\end{proof}

We use also the following modification of~\cite[Theorem~4.1]{boccardo}. \begin{prop}\label{prop:boc-mur}Assume  $2<p<\infty$, $\omega_1,\omega_2$ satisfy~(W1)-(W6). Suppose
\begin{equation}\label{eq:prob:boc-mur}
(u_m)_t-\dL (u_m)=g_m\qquad in\ {\cal D}'(\Omega),
\end{equation}
and $g_m\to g$ in $L^{p'}(0,T; W^{-1,p'}_{(\omega_1',\omega_2')}(\Omega))$ and $u_m\xrightharpoonup {} u$ in~$L^{p}(0,T; W^{1,p}_{(\omega_1,\omega_2),0}(\Omega)),$ when $m\to\infty$.

 Then, for any fixed $k>0$, we have the strong convergence of  the gradients
 \begin{equation} \label{end} \nabla T_k(u_m)\xrightarrow[m\to\infty]{} \nabla T_k(u)\qquad in\quad L^{p}\left(0,T; (L^{p}_{\omega_2}(U))^N\right).\end{equation}
\end{prop} 

\begin{proof} To get the  strong convergence  of the gradients it suffices to prove that  \begin{equation}\begin{split}\label{Em}
E^m:=\int_{\Omega_T}  \phi_K \omega_2\left[|\nabla T_k(u_m)|^{p-2}\nabla T_k(u_m)-|\nabla T_k(u)|^{p-2}\nabla T_k(u)\right]\cdot\\
\cdot  [\nabla T_k(u_m)-\nabla T_k(u)]dxdt\xrightarrow[m\to\infty]{}0\end{split}
\end{equation}   Indeed, due to weak convergence $u_m\xrightharpoonup{} u$ in $L^{p}(0,T; W^{1,p}_{(\omega_1,\omega_2),0}(\Omega))$ we can apply Proposition~\ref{prop:boc-mur-appl} get $u_m\xrightharpoonup{} u$ a.e.  in $\Omega_T$. Then Proposition~\ref{prop:bmpuel} for $\nu=T_k(u)$ and $\nu_m=T_k(u_m)$ will give~\eqref{end}. 

\medskip

To get~\eqref{Em} we write
\begin{equation}\label{Emsplit}\begin{split}E^m=&-\int_{\Omega_T}   \phi_K\omega_2 |\nabla  u_m |^{p-2}\nabla  u_m   [\nabla T_k(u_m)-\nabla T_k(u)]\chi_{\{u_m>k\}}dxdt+\\
&-\int_{\Omega_T}  \phi_K \omega_2 |\nabla T_k(u)|^{p-2}\nabla T_k(u)  [\nabla T_k(u_m)-\nabla T_k(u)]dxdt=\\
&+\int_{K}   \phi_K\omega_2 |\nabla  u_m |^{p-2}\nabla  u_m   [\nabla T_k(u_m)-\nabla T_k(u)]dxdt=\\
=&E_1^m+E_2^m+E_3^m,\end{split}\end{equation}
where we will show that each of $E_1^m,E_2^m,E_3^m$ converges to zero when $m\to\infty$. Since $\nabla T_k(u_m) \chi_{\{u_m>k\}}=0$, we have 
 \[E_1^m=\int_{\Omega_T}   \phi_K\omega_2 |\nabla  u_m |^{p-2}\nabla  u_m    \nabla T_k(u) \chi_{\{u_m>k\}}dxdt,\]
 where $|\nabla  u_m |$ is bounded in $L^{p }(0,T;(L^{p }_{\omega_2}(\Omega))^N) $ and for $m\to\infty$ we have $\nabla T_k(u) \chi_{\{u_m>k\}}\to \nabla T_k(u)\chi_{\{u >k\}}$ strongly in $L^{p}(0,T;L^{p}_{\omega_2}(U))^N).$ Then the~Monotone Convergence Theorem and fact that $u_m$ is nondecreasing give the point. $E_2^m$ converges to zero, because \[T_k(u_m)-T_k(u)\xrightharpoonup[m\to\infty]{} 0\quad \mathrm{weakly\ in\ }L^{p}(0,T; W^{1,p}_{(\omega_1,\omega_2)}(\Omega)).\]
 
Therefore, it suffices to prove that $E_3^m\to 0$ when $m\to\infty$. Let us concentrate on~\eqref{eq:prob:boc-mur} tested by a proper choice of test function. We define $S_k(s)=\int_0^s T_k(r)\,dr$, where $T_k$ is given by~\eqref{Tk}. Then for any $\phi\in{\cal D}(\Omega_T)$ and any $\zeta\in L^{p}(0,T;W^{1,p}_{(\omega_1,\omega_2)}(\Omega))$ such that $\zeta_t\in L^{p'}(0,T;W^{-1,p'}_{(\omega_1,\omega_2)}(\Omega))$ we have \[\int_{\Omega_T} \zeta_t\phi T_k(\zeta)dxdt=-\int_{\Omega_T} \phi_t\,S_k(\zeta)dxdt.\]
We fix arbitrary compact sets $K\subset \Omega_T$ and $U\subset \Omega$, such that $K\subset(0,T)\times U \subset \Omega_T$ and take an arbitrary function $\phi_K\in{\cal D}(\Omega_T)$ with $\mathrm{supp}\,\phi_K\subset K\subset\subset \Omega_T,$ such that $0\leq \phi_K\leq 1$   with $\phi_K=1$ on $K$. Then we test~\eqref{eq:prob:boc-mur} by
\[w_m=(T_k(u_m)-T_k(u))\phi_K\]
getting
\begin{multline*}0= \int_{\Omega_T} (u_m)_t\phi_K  \left[T_k(u_m)-T_k(u)\right]dxdt\\+\int_{\Omega_T} \phi_K \omega_2 |\nabla u_m|^{p-2}\nabla u_m [\nabla T_k(u_m)-\nabla T_k(u)]dxdt\\
+\int_{\Omega_T}\omega_2 |\nabla u_m|^{p-2}\nabla u_m[T_k(u_m)-T_k(u)]\nabla\phi_K dxdt\\-\int_{\Omega_T} g_m[T_k(u_m)-T_k(u)]\phi_K dxdt\\
=J^m_1+J^m_2+J^m_3+J^m_4,
\end{multline*}  
where we show that $\lim_{m\to\infty}(
J^m_1+J^m_3+J^m_4)=0$. Then the convergence of~$J^m_2$ to zero follows and implies $E_3^m\to 0$.

We deal with $J^m_1$ and $J^m_4$ in the similar way. We note that either sequence $((u_m)_t)_m$ or $(g_m)_m$ are bounded sequences in $ L^{p'}(0,T; W^{-1,p'}_{(\omega_1',\omega_2')}(\Omega))$. Therefore, Proposition~\ref{prop:boc-mur-appl} implies that up to a subsequence \[T_k(u_m)-T_k(u)\xrightarrow[m\to\infty]{} 0\quad\text{strongly in}\quad L^{p}(0,T; L^{p}_{\omega_1,loc}(\Omega)),\] as we have (W5)  and~\eqref{emb:sp}. Then $J^m_1,J^m_4\to 0$ as $m\to \infty$. As for $J^m_3$, we apply the H\"older inequality, to get
\[\begin{split} J^m_3&= \int_{\Omega_T}\omega_2 |\nabla u_m|^{p-2}\nabla u_m[T_k(u_m)-T_k(u)]\nabla\phi_K dxdt \\
&= \int_{U_T}\omega_2 |\nabla u_m|^{p-2}\nabla u_m[T_k(u_m)-T_k(u)]\nabla\phi_K dxdt \\
&\leq const\left[ \int_0^T\left(\int_{U}[T_k(u_m)-T_k(u)]^q dx \right)^\frac{p}{q}dt\right]^\frac{1}{p} \cdot\\&\qquad\qquad\qquad\cdot\left[ \int_0^T  \int_{U}\omega_2 |\nabla u_m|^{p} dx\,dt\right]^\frac{p-1}{p}\left( \int_{U}\omega_2^{\frac{q}{q-p}}dx\right)^\frac{q-p}{qp},
\end{split}\]  
where $c_H>0$, $U\subset\subset\Omega$ such that $\mathrm{supp}\phi_K\subset(0,T)\times U$, and $q$ comes from (W6).
Then \begin{equation}
\label{Jm3}
J^m_3\xrightarrow[m\to\infty]{}0.
\end{equation}

 Indeed,  by Proposition~\ref{prop:boc-mur-appl} we obtain $T_k(u_m)-T_k(u)\to 0$ strongly in~$L^p(0,T;L^s_{\omega_1}(U))$. Notice that~(W6) ensures that there exists $q$ such that  
\[L^p(0,T;L^s_{\omega_1}(U))\subset L^p(0,T;L^q(U)).\]
Moreover, the weak convergence of $(u_m)$ in $L^{p}(0,T;W^{1,p}_{(\omega_1,\omega_2),0}(\Omega))$ implies its uniform boundedness in this space (up to a subsequence), thus $\int_{U_T}\omega_2 |\nabla u_m|^{p} dxdt<C$, with a constant $C$ independent of $m$.
Finally,  $\int_{U }\omega_2^\frac{q}{q-p}dx<\infty$ due to~(W6). Therefore, we have~\eqref{Jm3}.

As $J^m_1+J^m_2+J^m_3+J^m_4=0$ and $\lim_{m\to\infty}(
J^m_1+J^m_3+J^m_4)=0$, then also  
 \begin{equation}
\label{eq:dlaEm}
 J^m_2=\int_{\Omega_T} \phi_K \omega_2 |\nabla u_m|^{p-2}\nabla u_m [\nabla T_k(u_m)-\nabla T_k(u)]dxdt\xrightarrow[m\to\infty]{}0.
 \end{equation}
Therefore, in~\eqref{Emsplit} we have $E_3^m\to 0$ and, hence,~\eqref{Em} and~\eqref{end}, which ends the proof.
\end{proof}

Existence of the solution to the~truncated problem is a consequence of the following result.

\begin{theo}[{\cite[Theorem~3.1]{isazg1}}]
 \label{thm:extrun} Let  $2<p<\infty$, $\Omega\subseteq\rn$ be an open subset, $f\in L^2(\Omega)$ and $\omega_1,\omega_2$ satisfy~(W1)-(W5).

 There exists  $\lambda_0=\lambda_0(p,N,\omega_1,\omega_2)>0$, such that for all $\lambda\in(0,\lambda_0)$ the parabolic problem
\begin{equation*}
\left\{\begin{array}{ll}
 u_t-\dL u=  \lambda W(x)|u|^{p-2}u & x\in\Omega,\\
 u(x,0)=f(x)& x\in\Omega,\\
 u(x,t)=0& x\in\partial\Omega,\ t>0,\\
\end{array}\right.
\end{equation*} 
where $W:\Omega\to\R_{+}$ is
such that
\[W(x)\leq \min\{m,\omega_1(x)\}\]
 with a certain $m\in\R_+$, has a global weak solution $u \in L^p(0,T; W_{(\omega_1,\omega_2),0}^{1,p}(\Omega)),$ such that $
u_t  \in L^{p'}(0,T; {W^{-1,p'}_{(\omega_1',\omega_2')}(\Omega)}),$ i.e.
\begin{equation*}
\int_{\Omega_T}\left(  u_t\xi+\omega_2|\nabla u|^{p-2} \nabla u \nabla \xi +\lambda W(x) |u|^{p-2}u \xi\right)dx\,dt=0,
\end{equation*}
holds for each $\xi\in L^p(0,T; W_{(\omega_1,\omega_2),0}^{1,p}(\Omega))$. Moreover, $u\in   L^\infty(0,T; L^2(\Omega_T))$.
\end{theo}

\begin{rem}\rm In our previous paper~\cite{isazg1} another embedding result was used, namely~\cite[Proposition~2.1]{anh}. It can be easily checked that the proof therein holds true as well, when we assume~(W5) instead of that one.\end{rem}

\section{Proof of the main result}\label{sec:main}

The main idea of the proof is to consider a truncated problem
\[(u_m)_t-\dL u_m =  \lambda T_m(\omega_1) |u_m|^{p-2}u_m ,\]
where $T_m$ is the truncation defined in~\eqref{Tk}, and then pass to the limit with $m\to\infty$ using the auxiliary compactness results of the previous section.
 
\begin{proof}[Proof of Theorem~\ref{theo:main}]  We consider $u_m$ --- the solution to the truncated problem
\begin{equation}\label{eq:mtrunc}\left\{\begin{array}{ll}
 w_t-\dL w=  \lambda T_m(\omega_1) |w|^{p-2}w & x\in\Omega\\
w(x,0)=f(x)& x\in\Omega\\
w(x,t)=0& x\in\partial\Omega,\ t>0,\\
\end{array}\right. 
\end{equation}
where  $T_m$ is given by~\eqref{Tk}.  Due to Theorem~\ref{thm:extrun} there exists a~solution $u_m$ to the problem~\eqref{eq:mtrunc} such that \[u_m \in L^p(0,T; W^{1,p}_{(\omega_1,\omega_2),0}(\O))\cap  L^\infty(0,T;  L^2(\Omega )),\quad (u_m)_t \in L^{p'}( 0,T; W^{-1,p'}_{(\omega_1,\omega_2)}(\Omega )).\] 

\textbf{A priori estimate.} To pass to the limit with $m\to \infty,$ we need to obtain a priori estimate. In order to get it, we test the~problem~\eqref{eq:mtrunc} by $u_m$ getting
\begin{multline*}
\frac{1}{2}\frac{d}{dt}\|u_m\|^2_{L^2(\Omega)}+\int_{\Omega } \omega_2|\nabla u_m|^pdx=\lambda\int_{\Omega } T_m(\omega_1)|u_m|^p\,dx\leq\\\leq \lambda\int_{\Omega } \omega_1 |u_m|^p\,dx
\leq \frac{\lambda}{K}\int_{\Omega } \omega_2|\nabla u_m|^pdx,
\end{multline*}
where the last inequality is allowed due to the Hardy inequality~\eqref{eq:hardyintro}. Note that the density of Lipschitz and compactly supported functions in $W^{1,p}_{(\omega_1,\omega_2),0}(\O))$ is given by Fact~\ref{factemb}. Therefore, \[
\frac{1}{2}\frac{d}{dt}\|u_m\|^2_{L^2(\Omega)}+\left(1-\frac{\lambda}{K}\right)\int_{\Omega } \omega_2|\nabla u_m|^pdx\leq 0.
\]

Note that \[
\int_0^T\frac{d}{dt}\|u_m\|^2_{L^2(\Omega)}dt =\|u_m  (\cdot,T)\|_{L^2(\Omega)}^2- \|f\|_{L^2(\Omega)}^2.\]

Summing up, we obtain
\begin{multline}\label{apriori}
\frac{1}{2}\| u_m (\cdot,T)\|_{L^2(\Omega)}^2+ \left(1-\frac{\lambda}{K}\right)\int_0^T\|\nabla u_m (\cdot,t)\|^p_{L_{\omega_2}^p(\Omega )}dt  \leq  
\frac{1}{2}\|f \|_{L^2(\Omega)}^2. \end{multline}

\textbf{Convergence.} In particular, the above a priori estimate implies  
\begin{itemize}
\item $(u_m)_{m\in\n}$ is bounded in $ L^\infty(0,T;  L^2(\Omega ))$;
\item $(u_m)_{m\in\n}$ is bounded in $ L^p( 0,T; W_{(\omega_1,\omega_2),0}^{1,p}(\Omega)).$
\end{itemize}
Thus, there exists a function $u \in L^p(0,T; W_{(\omega_1,\omega_2),0}^{1,p}(\Omega ))\cap  L^\infty(0,T;  L^2(\Omega ))$ with  $u_t \in L^{p'}( 0,T; W_{(\omega_1,\omega_2)}^{-1,p'}(\Omega))$, such that and up to a subsequence, we have
\begin{eqnarray}
u_m\xrightharpoonup[m\to\infty]{\ \ *\ \ } u& \mathrm{in}&   L^\infty(0,T;  L^2(\Omega )),\label{eq:ulimit}\\
u_m\xrightharpoonup[m\to\infty]{\ \ \ \ \ } u& \mathrm{in}&   L^p( 0,T; W_{(\omega_1,\omega_2),0}^{1,p}(\Omega )).\nonumber
\end{eqnarray} 

We know that for each $\xi\in L^p(0,T; W_{(\omega_1,\omega_2),0}^{1,p}(\Omega))$ the following equality holds
\begin{equation} \label{eq:weakum}
\int_{\Omega_T}\left(  (u_m)_t\xi+\omega_2|\nabla u_m|^{p-2} \nabla u_m \nabla \xi +\lambda T_m(\omega_1) |u_m|^{p-2}u_m \xi\right)dx\,dt=0.
\end{equation}

\textbf{Identification of the limit function $u$.} We have to show that the limit  function $u$ from~\eqref{eq:ulimit} is the weak solution to~\eqref{eq:main}, i.e.
\begin{equation} \label{u:weak}
\int_{\Omega_T}\left(  u_t\xi+\omega_2|\nabla u|^{p-2} \nabla u \nabla \xi +\lambda \omega_1 |u|^{p-2}u \xi\right)dx\,dt=0 
\end{equation}
holds for each $\xi\in L^p(0,T; W_{(\omega_1,\omega_2),0}^{1,p}(\Omega))$. 

Let us note that the integral above is well--defined within this class, in particular $ L^p(0,T; W_{(\omega_1,\omega_2),0}^{1,p}(\Omega))\subset L^2(\Omega_T)$. The weak convergence of gradients is not enough to pass to the limit with  $\int_{Q_T}\omega_2|\nabla u_m|^{p-2} \nabla u_m \nabla \xi$. Thus, the first step is to get strong convergence of~gradients. We follow the spirit of~Boccardo and Murat to obtain a strong convergence  of the gradients of trucations and apply it in~\eqref{eq:weakum} splitted into
\begin{equation}
\label{sumAm}\begin{split}
0=&\int_{\Omega_T}   (u_m)_t\xi dx\,dt+\int_{\Omega_T\cap \{|u_m|\leq k\}} \omega_2|\nabla u_m|^{p-2} \nabla u_m \nabla \xi dx\,dt+\\
&+\int_{\Omega_T\cap \{|u_m|> k\}} \omega_2|\nabla u_m|^{p-2} \nabla u_m \nabla \xi dx\,dt+\int_{\Omega_T} \lambda T_m(\omega_1) |u_m|^{p-2}u_m \xi dx\,dt\\&=A_1^m+A_2^{k,m}+A_3^{k,m}+A_4^m,
\end{split}
\end{equation}
where we prove that $A_1^m,A_2^{k,m},A_4^m$ converges to the desired quantities to retrieve~\eqref{u:weak} in the limit, whereas $A_3^{k,m}\to 0$.

The convergence of 
\begin{equation}
\label{A1}A_1^m\xrightarrow[m\to\infty]{}\int_{\Omega_T} u_t\xi\,dxdt\end{equation} can be obtained by integrating by~parts and by the Lebesgue's Monotone Convergence Theorem since $(u_m)_m$ is a~nondecreasing sequence.

Our aim now is to show that
\begin{equation}
\label{A2}\begin{split}
\limsup_{k\to\infty}\lim_{m\to\infty}A_2^{k,m}
&=\int_{\Omega_T}  \omega_2|\nabla  u|^{p-2} \nabla  u  \nabla \xi dx\,dt.\end{split}
\end{equation} For this we use Proposition~\ref{prop:boc-mur} implying
\[ \nabla T_k(u_m)\xrightarrow[m\to\infty]{} \nabla T_k(u)\quad \mathrm{in}\quad L^{p}\left(0,T; (L^{p}_{\omega_2}(U))^N\right).\] Its assumptions are satisfied, because besides the weak convergence of functions, we have
\begin{equation}
\label{gmconv}g_m=\lambda\omega_1 |u_m|^{p-2}u_m\xrightarrow[m\to\infty]{} \lambda\omega_1 |u|^{p-2}u=g
\end{equation}
 in $L^{p'}(0,T; W^{-1,p'}_{(\omega_1',\omega_2')}(\Omega))$. To justify this we apply the Aubin--Lions Lemma (Theorem~\ref{AubinLionsLemmarefl}). Since we assume (W5) and we know~\eqref{emb:chain2}, we have \[W^{1,p}_{(\omega_1,\omega_2),0}(U)\subset\subset L^{p}_{ \omega_1}(U)\subset W^{-1,p'}_{(\omega_1',\omega_2')}(\Omega).\] 
 Then we infer that $u_m\to u$ strongly in $L^p(0,T;L^p_{\omega_1}(U))$. Strongly convergent sequence has a subsequence convergent almost everywhere. If it is necessary, we pass to such subsequence, but we do not change the notation. Note that 
\begin{multline*}\|g_m\|^{p'}_{ L^{p'}( 0,T; L_{ \omega_1' }^{ p'}(\Omega ))}=\lambda\int_{\Omega_T}\omega_1'\left|\omega_1|u_m|^{p-1}\right|^\frac{p}{p-1}dxdt=\\
=\lambda\int_{\Omega_T}\omega_1^{-\frac{1}{p-1}} \omega_1^{ \frac{p}{p-1}}|u_m|^{p } dxdt=\lambda\int_{\Omega_T}\omega_1 |u_m|^{p } dxdt<\infty \end{multline*} 
and thus\[g_m\in L^{p'}( 0,T; L_{ \omega_1' }^{ p'}(U))\subset L^{p'}( 0,T; W_{(\omega_1',\omega_2')}^{-1,p'}(\Omega )).\]
According to the Brezis--Lieb Lemma (Corollary~\ref{coro:BrezisLiebLemma}) the strong convergence of $u_m\to u$  in $L^p(0,T;L^p_{\omega_1}(U))$ implies the strong convergence 
\[\omega_1 |u_m|^{p-2}u_m\to \omega_1|u|^{p-2}u\quad\mathrm{ in}\ L^{p'}(0,T;L^{p'}_{\omega_1'}(U)),\]
which entails strong convergence $g_m \to g$ in $L^{p'}(0,T;L^{p'}(0,T;L^{p'}_{\omega_1'}(U))$
and in~turn also~\eqref{gmconv}. This finishes the case of $\lim_{m\to\infty}A_2^m$. Limit when $k\to\infty$ results from the Lebesgue Monotone Convergence  Theorem and a priori estimate~\eqref{apriori}.
 
To pass to the limit 
\begin{equation}
\label{A3}\begin{split}
\limsup_{k\to\infty}\limsup_{m\to\infty}A_3^{k,m} 
&=0.\end{split}
\end{equation} we notice first that the H\"older  inequality implies that $A_3^m\leq s(k),$ with a~certain constant $s$ depending on $k$. Indeed,
 \begin{multline*}
 A_3^m=\int_{\Omega_T\cap \{|u_m|> k\}} \omega_2|\nabla u_m|^{p-2} \nabla u_m \nabla \xi dx\,dt\leq \\\leq\left(\int_{\Omega_T\cap \{|u_m|> k\}} \omega_2|\nabla u_m|^{p}dx\,dt\right)^\frac{p-1}{p}\left(\int_{\Omega_T\cap \{|u_m|> k\}} \omega_2| \nabla \xi|^p dx\,dt\right)^\frac{1}{p}\\
 \leq const \left(\int_{\Omega_T\cap \{|u|> k\}} \omega_2| \nabla \xi|^p dx\,dt\right)^\frac{1}{p}=s(k).\end{multline*}
Note that the integral on the right--hand side above is finite even for $k=0$ and that the sequence $(u_m)$ is nondecreasing (and thus $\{|u_m|> k\}\subset\{|u |> k\}$). Moreover, $s(k)\to 0$ when $k\to\infty$. Thus, we have~\eqref{A3}.

To complete the analysis of~\eqref{sumAm} we need to show the limit of $A_4^m$, namely 
\begin{equation}
\label{A4}\begin{split}
\limsup_{k\to\infty}\limsup_{m\to\infty}A_4^{m} 
= \int_{\Omega_T} \lambda  \omega_1  |u |^{p-2}u  \xi dx\,dt.\end{split}
\end{equation} We have
\[\begin{split} A_4^m-\int_{\Omega_T} & \omega_1 |u |^{p-2}u  \xi dx\,dt=\\
 =&\int_{\Omega_T} T_m(\omega_1) |u_m|^{p-2}u_m \xi dx\,dt-\int_{\Omega_T}  \omega_1 |u |^{p-2}u  \xi dx\,dt=\\
 =&\int_{\Omega_T} (  T_m(\omega_1) -\omega_1)|u_m|^{p-2}u_m \xi dx\,dt\\  
&+ \int_{\Omega_T} (|u_m|^{p-2}u_m-|u|^{p-2}u)\omega_1   \xi dx\,dt=\\
=&B_1^m+B_2^m,\end{split}
\]
where we show that both $B_1^m$ and $B_2^m$ tend to zero with $m\to\infty$. 

To deal with $B_1^m$ we recall that  $(|  u_m|^{p-2}  u_m)_m$ is bounded in~$ L^{p'}( 0,T; W_{(\omega_1',\omega_2')}^{-1,p'}(\Omega ))$ (cf. the case of~$A_2^m$), while $T_m(\omega_1)\nearrow\omega_1$, so the Lebesgue Monotone Convergence Theorem implies $B_1^m\to 0$ as $m\to\infty$. 

Let us concentrate on $B_2^m$. We have
\begin{multline*}|B_2^m|\leq  \left(\int_{\Omega_T} \left||u_m|^{p-2}u_m-|u|^{p-2}u\right|^\frac{p}{p-1}\omega_1 dxdt\right)^\frac{p-1}{p}   \left(\int_{\Omega_T}  \omega_1   |\xi|^p dx\,dt\right)^\frac{1}{p}.
\end{multline*}
We employ  the Brezis--Lieb Lemma (Corollary~\ref{coro:BrezisLiebLemma}) to get \[\omega_1 |u_m|^{p-2}u_m\to \omega_1|u|^{p-2}u_m\quad\mathrm{ in}\ L^{p'}(0,T;L^{p'} (U)),\]
leading to
\[  |u_m|^{p-2}u_m\to  |u|^{p-2}u_m\quad\mathrm{ in}\ L^{p'}(0,T;L^{p'}_{\omega_1}(U)).\]
which implies that $B_2^m\to 0$ as $m\to\infty$.

When we pass to the limit in~\eqref{sumAm} according to~\eqref{A1}, \eqref{A2}, \eqref{A3}, and \eqref{A4}, we get~\eqref{u:weak} and, thus, we conclude that $u$ is the desired weak solution.
\end{proof}

\section*{Appendix}\label{sec:auxtools}

For the sake of completeness we recall the general analytic tools necessary in~our approach. 
\begin{theo}[The Vitali Convergence Theorem]\label{theo:VitConv} Let $(X,\mu)$ be a positive measure space. If $\mu(X)<\infty $, $\{f_{n}\}$ is uniformly integrable,   $f_{n}(x)\to f(x)$ a.e.  and $|f(x)|<\infty $  a.e. in $X$, then  $f\in  {L}^1_\mu(X)$
and  $f_{n}(x)\to f(x)$ in  ${L}^1_\mu(X)$.
\end{theo}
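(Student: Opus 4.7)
The plan is to prove the Vitali Convergence Theorem by combining Egoroff's theorem (which applies because $\mu(X)<\infty$ and the almost everywhere limit $f$ is almost everywhere finite) with the two quantitative faces of uniform integrability. I recall that uniform integrability of $\{f_n\}$ on a finite measure space provides: (a) a uniform $L^1$-bound $M:=\sup_n\int_X|f_n|\,d\mu<\infty$, and (b) the $\varepsilon$--$\delta$ property that for every $\varepsilon>0$ there exists $\delta=\delta(\varepsilon)>0$ with
\[
\sup_n\int_E|f_n|\,d\mu<\varepsilon\qquad\text{whenever}\qquad\mu(E)<\delta.
\]

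First I would verify $f\in L^1_\mu(X)$. Since $f_n\to f$ a.e.\ and $|f|<\infty$ a.e., Fatou's lemma gives $\int_X|f|\,d\mu\le\liminf_n\int_X|f_n|\,d\mu\le M$. Next I would transfer the $\varepsilon$--$\delta$ property to the limit $f$: for any measurable $E$ with $\mu(E)<\delta(\varepsilon)$, applying Fatou's lemma on $E$ yields
\[
\int_E|f|\,d\mu\le\liminf_n\int_E|f_n|\,d\mu\le\varepsilon,
\]
so $f$ itself is absolutely continuous with respect to $\mu$ in the same uniform sense.

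Now, given $\varepsilon>0$, I would pick the corresponding $\delta>0$ and invoke Egoroff's theorem to produce a measurable $E_\delta\subseteq X$ with $\mu(X\setminus E_\delta)<\delta$ on which $f_n\to f$ uniformly. Splitting
\[
\int_X|f_n-f|\,d\mu=\int_{E_\delta}|f_n-f|\,d\mu+\int_{X\setminus E_\delta}|f_n-f|\,d\mu,
\]
the first term is bounded by $\mu(X)\cdot\sup_{E_\delta}|f_n-f|$, which tends to $0$ as $n\to\infty$ by uniform convergence. The second term is bounded by $\int_{X\setminus E_\delta}|f_n|\,d\mu+\int_{X\setminus E_\delta}|f|\,d\mu<2\varepsilon$ by the choice of $\delta$ and the transfer step above. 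Taking $\limsup_{n\to\infty}$ and then $\varepsilon\downarrow 0$ yields $f_n\to f$ in $L^1_\mu(X)$.

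Since this is a classical statement, no real obstacle arises; the only subtle point, and the one I would emphasize in the write-up, is the transfer of the $\varepsilon$--$\delta$ control from $\{f_n\}$ to $f$ via Fatou on sets of small measure, because without it the contribution on $X\setminus E_\delta$ from $|f|$ could not be absorbed. The finiteness of $\mu(X)$ is essential at two separate places: in turning a.e.\ convergence into Egoroff-type nearly uniform convergence, and in deducing the uniform $L^1$-bound $M$ from the $\varepsilon$--$\delta$ condition by partitioning $X$ into finitely many pieces of $\mu$-measure less than $\delta$.
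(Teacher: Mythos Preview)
Your proof is correct and follows the standard Egoroff-plus-Fatou route to the Vitali Convergence Theorem. Note, however, that the paper does not actually prove this statement: it is merely recalled as a classical auxiliary tool in the preliminaries section, without proof. So there is no argument in the paper to compare your approach against. Your write-up is self-contained and the one subtle step you flag (transferring the $\varepsilon$--$\delta$ control to the limit $f$ via Fatou on small sets) is indeed the only place where a careless reader might stumble.
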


For the Aubin--Lions Lemmas we refer e.g. to~\cite{simon}.  
\begin{theo}[The Aubin Lions Lemma~1] 
 \label{AubinLionsLemmarefl} Suppose $1<p<\infty$, $X,B,Y$ are the Banach spaces, $X\subset\subset B\subset Y$, $F$ is bounded in $L^p(0,T;X)$ and~relatively compact in $L^p(0,T;Y)$ then $F$ is relatively compact in $L^p(0,T;B)$.
\end{theo}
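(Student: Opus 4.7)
The plan is to reduce the statement to the classical Ehrling (or Lions) interpolation inequality: because the embedding $X\subset\subset B$ is compact while $B\subset Y$ is merely continuous, for every $\eta>0$ there exists a constant $C_\eta>0$ such that
\begin{equation*}
\|v\|_B \le \eta\,\|v\|_X + C_\eta\,\|v\|_Y \qquad\text{for all }v\in X.
\end{equation*}
Granted this inequality, the conclusion follows by a short Cauchy-sequence argument in $L^p(0,T;B)$, so the real work is Ehrling's inequality and a clean application of it.

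First I would establish Ehrling's inequality by contradiction. If it failed for some $\eta_0>0$, there would exist $v_n\in X$ with $\|v_n\|_B=1$ and $\|v_n\|_B > \eta_0\|v_n\|_X + n\|v_n\|_Y$. Normalizing so that $\|v_n\|_X$ stays bounded (using $\eta_0\|v_n\|_X<1$) and invoking $X\subset\subset B$, a subsequence would converge in $B$ to some $v$ with $\|v\|_B=1$. Meanwhile $\|v_n\|_Y<1/n\to 0$, and by $B\subset Y$ continuity $v_n\to v$ in $Y$ as well, forcing $v=0$: contradiction.

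Next I would use the inequality to prove the compactness claim. Take any sequence $\{f_n\}\subset F$. By assumption $F$ is relatively compact in $L^p(0,T;Y)$, so along a subsequence (still denoted $f_n$) we have $f_n\to f$ in $L^p(0,T;Y)$, hence it is Cauchy there. Applying Ehrling pointwise in $t$ to $f_n(t)-f_m(t)$ and raising to the $p$-th power,
\begin{equation*}
\|f_n(t)-f_m(t)\|_B^p \le 2^{p-1}\eta^p\,\|f_n(t)-f_m(t)\|_X^p + 2^{p-1}C_\eta^p\,\|f_n(t)-f_m(t)\|_Y^p.
\end{equation*}
Integrating over $(0,T)$ and using boundedness of $F$ in $L^p(0,T;X)$ by some constant $M$,
\begin{equation*}
\|f_n-f_m\|_{L^p(0,T;B)}^p \le 2^{2p-1}\eta^p M^p + 2^{p-1}C_\eta^p\,\|f_n-f_m\|_{L^p(0,T;Y)}^p.
\end{equation*}
Given $\varepsilon>0$, first choose $\eta$ small so that the first term is below $\varepsilon/2$, then take $n,m$ large so that Cauchyness in $L^p(0,T;Y)$ drives the second term below $\varepsilon/2$. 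Hence $\{f_n\}$ is Cauchy in $L^p(0,T;B)$, which is a Banach space, so it converges there. Since every sequence in $F$ admits a subsequence convergent in $L^p(0,T;B)$, $F$ is relatively compact there.

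The main obstacle is the Ehrling step: the statement gives only compactness of $X\hookrightarrow B$ and no quantitative modulus, so one truly must argue by contradiction using a normalized minimizing sequence. Once this interpolation is in hand, the rest is essentially a textbook $\varepsilon/2$-splitting. There are no equicontinuity-in-time hypotheses to verify (as in the more common Aubin--Lions statement with a time-derivative bound), which keeps the argument purely metric.
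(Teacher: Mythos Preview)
Your proof is correct. The paper does not supply its own proof of this statement; it merely records it as a known preliminary tool with a reference to Simon~\cite{simon}. Hence there is no argument in the paper to compare against. Your route via Ehrling's interpolation inequality followed by an $\varepsilon/2$-splitting is the standard one and goes through without issue: the contradiction argument for Ehrling is fine once you normalize $\|v_n\|_B=1$ (which you do), and the Cauchy argument in $L^p(0,T;B)$ is clean. One cosmetic remark: in the final step you show the subsequence is Cauchy in $L^p(0,T;B)$ and hence convergent; you need not identify the limit with the $L^p(0,T;Y)$-limit $f$ for the purpose of relative compactness, but it does follow immediately from the continuous embedding $B\subset Y$.
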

 \begin{theo}[The Aubin Lions Lemma~2]\label{AubinLionsLemma} Suppose $1\leq p<\infty$, $X,B,Y$ are the Banach spaces, $X\subset\subset B\subset Y$.   If $F$ is bounded in $L^p(0,T;X)$ and~$\frac{dF}{dt}$ is bounded in~$L^r(0,T;Y)$, where $r>1$,  then $F$ is relatively compact in~$L^p(0,T;B)$.
\end{theo}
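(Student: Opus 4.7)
The plan is to follow the classical Aubin–Lions scheme: use an Ehrling-type interpolation inequality to reduce the problem to relative compactness in the ambient space $L^p(0,T;Y)$, and then obtain that via equicontinuity of time-translations (from the derivative bound) combined with pointwise-in-time relative compactness (from the $X$-regularity and compact embedding).

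First I would invoke Ehrling's lemma, which follows from $X \subset\subset B \subset Y$ by a standard contradiction argument: for every $\varepsilon > 0$ there exists $C_\varepsilon$ such that $\|v\|_B \leq \varepsilon \|v\|_X + C_\varepsilon \|v\|_Y$ for all $v \in X$. Applied to $f_n(t) - f_m(t)$ for arbitrary $f_n, f_m \in F$, raising to the $p$-th power and integrating in $t$, one gets
\[
\|f_n - f_m\|_{L^p(0,T;B)} \leq C \varepsilon M + C_\varepsilon \|f_n - f_m\|_{L^p(0,T;Y)},
\]
where $M$ is the uniform bound of $F$ in $L^p(0,T;X)$. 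This reduces the task to extracting a subsequence of $F$ that converges in $L^p(0,T;Y)$: taking $n,m \to \infty$ first and $\varepsilon \to 0$ second yields a Cauchy subsequence in $L^p(0,T;B)$.

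Next I would verify the hypotheses of a Kolmogorov–Simon-type compactness criterion in $L^p(0,T;Y)$. For equicontinuity of time translations, for $h \in (0,T)$ and $f \in F$ one writes $f(t+h) - f(t) = \int_t^{t+h} f'(s)\,ds$ and applies Hölder to get $\|f(t+h) - f(t)\|_Y \leq h^{1-1/r}\|f'\|_{L^r(t,t+h;Y)}$; integrating in $t$ yields $\|\tau_h f - f\|_{L^p(0,T-h;Y)} \leq C h^{1-1/r}$ uniformly in $f \in F$, which is where the hypothesis $r > 1$ enters decisively. For pointwise regularity, for any $[t_1,t_2] \subset [0,T]$, Hölder gives
\[
\Bigl\|\int_{t_1}^{t_2} f(s)\,ds\Bigr\|_X \leq (t_2-t_1)^{1-1/p}\|f\|_{L^p(0,T;X)},
\]
so the set of such time-averages is bounded in $X$ and thus, by $X \subset\subset B$ together with $B \subset Y$, relatively compact in $Y$.

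The two ingredients are then combined via mollification and Arzelà–Ascoli. Setting $f_h(t) := h^{-1} \int_t^{t+h} f(s)\,ds$, the equicontinuity estimate shows $f_h \to f$ in $L^p(0,T;Y)$ uniformly for $f \in F$ as $h \to 0$; for each fixed $h$, the family $\{f_h : f \in F\} \subset C([0,T];Y)$ is uniformly bounded, equicontinuous, and pointwise valued in a relatively compact subset of $Y$, so Arzelà–Ascoli yields a uniformly convergent subsequence. A diagonal extraction over $h \to 0$ produces a subsequence convergent in $L^p(0,T;Y)$, and combining with the first step gives relative compactness in $L^p(0,T;B)$. The main obstacle is this final packaging step: for Banach-valued functions, Kolmogorov-type compactness in $L^p(0,T;Y)$ is not automatic from translation equicontinuity alone, and one genuinely needs the compact embedding $X \subset\subset B$ to upgrade the pointwise $X$-bounds of time-averages to relative compactness in the target space.
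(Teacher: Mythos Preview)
The paper does not prove this statement at all; it is recorded in the preliminaries as a classical tool with the remark ``For the Aubin--Lions Lemmas we refer e.g.\ to~\cite{simon}.'' Your sketch is precisely the standard Simon argument that the citation points to---Ehrling's inequality to reduce from $B$ to $Y$, the translation estimate $\|\tau_h f-f\|_{L^p(0,T-h;Y)}\le C h^{1-1/r}$ from the $L^r$ bound on $f'$, and then the time-mollification plus Arzel\`a--Ascoli step to obtain convergence of a subsequence in $L^p(0,T;Y)$---so in substance you are supplying exactly what the paper delegates to the reference.

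One small remark on presentation: in your equicontinuity step, after $\|f(t+h)-f(t)\|_Y\le h^{1-1/r}\|f'\|_{L^r(t,t+h;Y)}$, the cleanest way to reach the $L^p$-in-$t$ bound is simply to observe $\|f'\|_{L^r(t,t+h;Y)}\le \|f'\|_{L^r(0,T;Y)}\le M'$, giving a pointwise-in-$t$ estimate and hence the $L^p$ one with constant $T^{1/p}M'$. Also, in the Arzel\`a--Ascoli step you implicitly use that relative compactness in $B$ passes to relative compactness in $Y$ via the continuous inclusion $B\subset Y$; this is true but worth stating, since you only have $X\subset\subset B$, not $X\subset\subset Y$ directly.
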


For the Brezis Lieb Lemma we refer to~\cite{brezlieb}.
\begin{theo}[The Brezis Lieb Lemma]
 \label{BrezisLiebLemma} Suppose $\Omega\subset\rn$, $1\leq p<\infty$, and $\mu\geq 0$ is a Radon measure. If $f_n\to f$ a.e. in $\Omega$ and $(f_n)_n$ is bounded in ${L^p_\mu(\Omega)}$, then the following limit exists \[\lim_{n\to\infty}\left(\|f_n\|_{L^p_\mu(\Omega)}^p-\|f-f_n\|_{L^p_\mu(\Omega)}^p\right)= \|f \|_{L^p_\mu(\Omega)}^p\]
and the equality holds.
\end{theo}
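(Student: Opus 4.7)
The plan is to reduce the statement to showing
\[
\int_\Omega \bigl(|f_n|^p - |f_n - f|^p - |f|^p\bigr)\,d\mu \longrightarrow 0,
\]
which is equivalent to the claimed limit. First I would invoke Fatou's lemma to confirm that $f \in L^p_\mu(\Omega)$: since $f_n \to f$ $\mu$-a.e.\ and $\sup_n \|f_n\|_{L^p_\mu(\Omega)} \le M < \infty$, one has $\|f\|_{L^p_\mu(\Omega)}^p \le \liminf_n \|f_n\|_{L^p_\mu(\Omega)}^p \le M^p$. This also yields a uniform bound on $\|f_n - f\|_{L^p_\mu(\Omega)}$.

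The core of the argument is the elementary pointwise inequality: for every $\varepsilon > 0$ there exists a constant $C_\varepsilon > 0$ such that, for all $a, b \in \mathbb{R}$,
\[
\bigl||a+b|^p - |a|^p\bigr| \;\le\; \varepsilon\, |a|^p \;+\; C_\varepsilon\, |b|^p.
\]
This follows from the convexity/homogeneity of $t \mapsto |t|^p$ together with Young's inequality applied to the binomial expansion of $|a+b|^p$. Applying this with $a := f_n - f$ and $b := f$ (so $a+b = f_n$), and then adding $|f|^p$ to both sides, gives
\[
\bigl| |f_n|^p - |f_n-f|^p - |f|^p \bigr| \;\le\; \varepsilon\, |f_n - f|^p \;+\; (C_\varepsilon + 1)\, |f|^p.
\]

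Next I would set
\[
W_n^\varepsilon := \Bigl(\bigl||f_n|^p - |f_n-f|^p - |f|^p\bigr| - \varepsilon\, |f_n - f|^p\Bigr)_+,
\]
so that $0 \le W_n^\varepsilon \le (C_\varepsilon + 1)|f|^p \in L^1_\mu(\Omega)$ and $W_n^\varepsilon \to 0$ $\mu$-a.e.\ by the a.e.\ convergence $f_n \to f$. Dominated convergence yields $\int_\Omega W_n^\varepsilon\,d\mu \to 0$ as $n \to \infty$. Combining,
\[
\Bigl| \int_\Omega \bigl(|f_n|^p - |f_n-f|^p - |f|^p\bigr)\,d\mu \Bigr| \;\le\; \varepsilon\, \|f_n - f\|_{L^p_\mu(\Omega)}^p \;+\; \int_\Omega W_n^\varepsilon\,d\mu,
\]
and taking $\limsup_{n\to\infty}$ gives $\limsup \le \varepsilon \cdot (2M)^p$. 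Letting $\varepsilon \downarrow 0$ closes the argument.

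The main obstacle is genuinely only the elementary inequality; the rest is a clean application of dominated convergence. A subtlety worth watching is that the inequality must be robust enough to survive the a.e.\ hypothesis without any equi-integrability assumption on $(f_n)$, and this is exactly what the splitting into the small-$\varepsilon|f_n-f|^p$ piece and the $|f|^p$-dominated piece $W_n^\varepsilon$ buys us.
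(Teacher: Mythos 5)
Your argument is correct and is precisely the classical proof from the reference the paper cites for this statement (Brezis--Lieb); the paper itself states the lemma as an auxiliary tool without proof. The decomposition via the elementary inequality $\bigl||a+b|^p-|a|^p\bigr|\le\varepsilon|a|^p+C_\varepsilon|b|^p$, the truncation $W_n^\varepsilon$ dominated by $(C_\varepsilon+1)|f|^p$, and the final $\limsup$--then--$\varepsilon\downarrow 0$ step are exactly the standard route, and your use of Fatou to place $f$ in $L^p_\mu(\Omega)$ and to bound $\|f_n-f\|_{L^p_\mu(\Omega)}$ closes the only prerequisite. No gaps.
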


We have the following corollary of the above theorem.
\begin{coro}\label{coro:BrezisLiebLemma} Suppose $\Omega\subset\rn$, $1\leq p<\infty$, and $\omega_1:\Omega\to\r\cup\{0\}$ is measurable. If $u_m\to u$ strongly in $L^p(0,T; L^p_{\omega_1}(\Omega))$, then \[\omega_1 |u_m|^{p-2}u_m\to\omega_1 |u|^{p-2}u\quad \mathrm{strongly\ in\ }L^{p'}(0,T; L^{p'}_{\omega_1'}(\Omega)).\]
\end{coro}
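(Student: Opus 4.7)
The plan is to realize $w\mapsto\omega_1|w|^{p-2}w$ as a norm-preserving correspondence (up to $p$-th and $p'$-th powers) between $L^p_{\omega_1}$ and $L^{p'}_{\omega_1'}$, and then invoke the Brezis-Lieb lemma stated just above. The essential algebraic identity, which follows from $\omega_1'=\omega_1^{-1/(p-1)}$ and $p'(p-1)=p$, is that $\omega_1^{p'}\omega_1'=\omega_1$, so that for any measurable $w\colon\Omega_T\to\r$,
\[
\int_0^T\!\int_\Omega\bigl|\omega_1|w|^{p-2}w\bigr|^{p'}\omega_1'\,dx\,dt\;=\;\int_0^T\!\int_\Omega\omega_1|w|^p\,dx\,dt.
\]
In particular, setting $f_m:=\omega_1|u_m|^{p-2}u_m$ and $f:=\omega_1|u|^{p-2}u$, one has $\|f_m\|^{p'}_{L^{p'}(0,T;L^{p'}_{\omega_1'}(\Omega))}=\|u_m\|^p_{L^p(0,T;L^p_{\omega_1}(\Omega))}$, and the analogous identity for $f$.

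First I would extract a subsequence (not relabeled) along which $u_m\to u$ a.e.\ on $\Omega_T$ with respect to the measure $\omega_1\,dx\,dt$; this is standard given the strong convergence in $L^p(0,T;L^p_{\omega_1}(\Omega))$. By continuity of $s\mapsto|s|^{p-2}s$, on the set $\{\omega_1>0\}$ the subsequence $\omega_1|u_m|^{p-2}u_m$ converges a.e.\ to $\omega_1|u|^{p-2}u$, while on $\{\omega_1=0\}$ both sides vanish identically. Hence $f_m\to f$ a.e.\ on $\Omega_T$, and the displayed identity guarantees that $(f_m)$ is bounded in $L^{p'}_\mu(\Omega_T)$ with $\mu:=\omega_1'\,dx\,dt$.

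Next I would apply Theorem~\ref{BrezisLiebLemma} with measure $\mu$ and exponent $p'$ to $f_m$ and its a.e.\ limit $f$. The lemma yields $\lim_{m\to\infty}\bigl(\|f_m\|^{p'}_\mu-\|f_m-f\|^{p'}_\mu\bigr)=\|f\|^{p'}_\mu$. Combined with $\|u_m\|^p_{L^p_{\omega_1}}\to\|u\|^p_{L^p_{\omega_1}}$ (forced by the assumed strong convergence) together with the key identity, this gives $\|f_m\|^{p'}_\mu\to\|f\|^{p'}_\mu$, hence $\|f_m-f\|^{p'}_\mu\to0$, which is exactly the claimed strong convergence of $f_m$ to $f$ in $L^{p'}(0,T;L^{p'}_{\omega_1'}(\Omega))$. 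A standard subsequence principle then upgrades this from a single subsequence to the whole sequence, since the limit $f$ is uniquely determined. No step presents a serious obstacle; the only mild technicality is the handling of the nullset $\{\omega_1=0\}$, which is harmless since both $f_m$ and $f$ vanish identically there, so the pointwise convergence and the integral identity both hold trivially on that set.
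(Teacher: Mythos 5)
Your proof is correct and follows essentially the same route as the paper's: both rest on the identity $\omega_1'\,\omega_1^{p'}=\omega_1$ and a single substantive application of the Brezis--Lieb lemma to the sequence of $(p-1)$-powers, combined with a.e.\ convergence and convergence of norms. The paper merely reverses the order (applying Brezis--Lieb in $L^{p'}_{\omega_1}$ first and converting weights afterwards), while you are somewhat more explicit about the subsequence extraction and the final subsequence principle.
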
\begin{proof}
If $u_m\to u$ strongly in $L^p(0,T; L^p_{\omega_1}(\Omega))$ and a.e. in~$\Omega$, then  Theorem~\ref{BrezisLiebLemma}  yields that
\[\int_{\Omega_T}\omega_1 |u_m|^p\,dx\,dt\to\int_{\Omega_T}\omega_1 |u |^p\,dx\,dt.\]
Equivalently,
\[\int_{\Omega_T}\omega_1 \left||u_m|^{p-2}u_m\right|^\frac{p}{p-1}\,dx\,dt\to\int_{\Omega_T}\omega_1 \left||u |^{p-2}u \right|^\frac{p}{p-1}\,dx\,dt,\]
which, once again by  Theorem~\ref{BrezisLiebLemma}, implies
\[ |u_m|^{p-2}u_m\to  |u|^{p-2}u\quad \mathrm{strongly\ in\ }L^{p'}(0,T; L^{p'}_{\omega_1 }(\Omega)).\]
When we observe that
\[\begin{split}&\int_{\Omega_T}\omega_1 \left(|u_m|^{p-1}-|u|^{p-1} \right)^\frac{p}{p-1}\,dx\,dt\\&=\int_{\Omega_T}\omega_1' \left(\omega_1 |u_m |^{p-1}-\omega_1 |u |^{p-1} \right)^\frac{p}{p-1}\,dx\,dt,\end{split}\]
we conclude that
\[\omega_1 |u_m|^{p-2}u_m\to\omega_1 |u|^{p-2}u\quad \mathrm{strongly\ in\ }L^{p'}(0,T; L^{p'}_{\omega_1'}(\Omega)).\]
\end{proof}

\bibliographystyle{plain}
\bibliography{bib}

\end{document}